\title{The law of the iterated logarithm for a class of transient random walk in random environment}
\author{Naoki KUBOTA\footnote{Department of Mathematics, e-mail: kubota@grad.math.cst.nihon-u.ac.jp}\\
        Graduate School of Science and Technology, Nihon University}
\begin{document}
\maketitle

\begin{abstract}
There is a condition (T'), such that it is the necessary condition that a random walk in random environment is ballistic. 
Under this condition, we show the law of the iterated logarithm for a random walk in random environment. 
\end{abstract}

\section{Introduction}
Random walks in random environment constitute one of the basic models of random motions in random media. 
Their asymptotic behavior has been extensively investigated in the one-dimensional case, see for details in \cite{MR1420619} or \cite{MR2071631}. 
On the other hand, the multi-dimensional case has been the object of comparatively few works and remains altogether poorly understood. 
Recent advances have mainly been concerned with the ballistic situation where the walk has a non-degenerate asymptotic velocity. 
The recent article \cite{MR1902189} introduced the condition (T'), which implies that a random walk in random environment is ballistic for a dimension bigger than one. 
Under this condition, we consider the asymptotic behavior of a random walks in random environment. 

We begin to explain the setting. 
A random walk in random environment (RWRE) on $\Z^d \,(d \geq 1)$ is defined as follows: 
let $\mathcal{E}_d:=\{ e \in \Z^d;|e|=1 \}$, and we consider
\begin{align*}
 \mathcal{M}_d:= \biggl\{ p : \mathcal{E}_d \to [0,1];\sum_{|e|=1} p(e)=1 \biggr\}.
\end{align*}
For a given probability measure $\mu$ on $\mathcal{M}_d$, we set
\begin{align*}
 \Omega:=(\mathcal{M}_d)^{\Z^d},\quad
 \P:=\mu^{\Z^d},
\end{align*}
and consider the probability space $(\Omega,\mathcal{G},\P)$, where $\mathcal{G}$ is the $\sigma$-field generated by cylinder sets on $\Omega$. 
We call an element $\omega=(\omega(x,\cdot))_{x \in \Z^d}$ of $\Omega$ the \textit{random environment}. 
Through this paper, we always assume that $\P$ is \textit{uniformly elliptic}, \textit{i.e.} There exists a constant $\kappa>0$ such that $\P(\omega(0,e) \geq \kappa)=1, |e|=1$D
For each environment $\omega \in \Omega$, the \textit{random walk in random environment} $\omega$ is a time-homogeneous Markov chain $((X_n)_{n=0}^\infty,(P_\omega^x)_{x \in \Z^d})$ with the state space $\Z^d$, under which: 
\begin{align*}
 &P_\omega^x(X_0=x)=1,\\
 &P_\omega^x(X_{n+1}-X_n=e|\mathcal{F}_n)=\omega(X_n,e),\qquad
  |e|=1, x \in \Z^d,
\end{align*}
where $\mathcal{F}_n:=\sigma(X_0,\dots,X_n), n \geq 0$. 
We call $P_\omega^x$ the \textit{quenched law}. 
We set $\mathcal{F}=\sigma(\cup_{n=0}^\infty \mathcal{F}_n)$ and define the \textit{annealed law} $P^0$ on $(\Omega \times (\Z^d)^{\Z_{\geq 0}},\mathcal{G} \times \mathcal{F})$ by
\begin{align*}
 P^0(G \times F):=\int_{G} P_\omega^0(F) \,\P(d \omega),\qquad
 G \in \mathcal{G}, F \in \mathcal{F}.
\end{align*}
We denote by $\E, E_\omega^x,$ and $E^0$ expectations of $\P, P_\omega^x,$ and $P^0$, respectively. 

A regeneration structure is important to catch the future of ballistic RWRE. 
We shall prepare some notations to consider a regeneration structure. 
For $\ell \in S^{d-1}:=\{ w \in \R^d;|w|=1 \}$, we consider stopping times 
\begin{align*}
 &D:=\inf \{ n \geq 0 ; X_n \cdot \ell < X_0 \cdot \ell \},\\
 &T_{\geq L}:=\inf \{ n \geq 0;X_n \cdot \ell \geq L \},\qquad
 L \in \R.
\end{align*}
We will define the regeneration time, which is, roughly speaking, the first time where $X_n \cdot \ell$ goes by its previous local maxima and never goes below this level from then on. 
More precisely, the regeneration time is defined as follows: 
We define random times and associated random variables 
\begin{align*}
 &S_{k+1}:=T_{\geq M_k+1},\quad S_0:=0,\\
 &R_{k+1}:=D \circ \theta_{S_{k+1}}+S_{k+1},\\
 &M_{k+1}:=\sup \{ X_m \cdot \ell ; 0 \leq m \leq R_{k+1} \},\quad
  M_0:=X_0 \cdot \ell,\qquad k \geq 0,\\
 &K:=\inf \{ j \geq 1 ; S_j<\infty, R_j=\infty \}, 
\end{align*}
where $\theta$ is the canonical shift on the space of trajectories. 
In addition, the \textit{regeneration time} is $\tau_1:=S_K$ with the convention $S_\infty=\infty$. 

We are interested in the asymptotic behavior of the ballistic RWRE. 
Sznitman \cite{MR1902189} introduced conditions $\textrm{(T)}_\gamma|\ell$ and $\textrm{(T')}|\ell$ to analyze the ballistic RWRE; 
for a direction $\ell \in S^{d-1}$ and $\gamma \in (0,1]$, we say ``the $\textit{condition (T)}_\gamma|\ell$ holds'' if the following conditions hold: 
\begin{enumerate}
 \item $P^0(\lim_{n \to \infty} X_n \cdot \ell=\infty)=1$,
 \item for some constant $c>0$, 
       \begin{align}
        E^0[\exp (c X_\gamma^*)]<\infty, 
        \label{eq:T}
       \end{align}
       where $X_\gamma^*:=\sup \{ |X_k|^\gamma;0 \leq k \leq \tau_1 \}$. 
\end{enumerate}
And we say ``the $\textit{condition (T')}|\ell$ holds'' if $\textrm{(T)}_\gamma|\ell$ is approved for every $0<\gamma<1$. 
After this, we assume that a dimension $d$ is bigger than one and that $\textrm{(T')}|\ell$ holds for some $\ell \in S^{d-1}$. 
Under the $\textrm{condition (T')}|\ell$, it is already known that the law of large numbers and the central limit theorem hold, see for details in \cite{MR1902189} or \cite{MR2198849}: 
$P^0$-almost surely, 
\begin{align}
 \lim_{n \to \infty} \frac{X_n}{n}=v:=\frac{E^0[X_{\tau_1} | D=\infty]}{E^0[\tau_1 | D=\infty]},\quad v \cdot \ell >0
 \label{eq:lil3}
\end{align}
holds, and $B_\cdot^n:=\frac{1}{\sqrt{n}} (X_{[\cdot n]}-[\cdot n] v)$ converges in law to a Brownian motion with non-degenerate covariance matrix. 

We would like to discuss the law of the iterated logarithm for RWRE under same settings to the above. 
The main result of this paper is the following: 

\begin{thm} \label{thm:getdoc_3eng_1}
For any $u \in S^{d-1}$, 
\begin{align}
 &\limsup_{n \to \infty} \frac{(X_n-nv) \cdot u}{E^0[\tau_1 | D=\infty ]^{-\half} (2c_u n \log \log n^\half)^\half}
  = 1,\label{eq:getdoc_3eng_1}\\
 &\liminf_{n \to \infty} \frac{(X_n-nv) \cdot u}{E^0[\tau_1 | D=\infty ]^{-\half} (2c_u n \log \log n^\half)^\half}
  = -1, \label{eq:getdoc_3eng_2}
\end{align}
$P^0$-almost surely, where the constant $c_u$ is defined by
\begin{align*}
 c_u
 := \var_{P^0(\cdot | D=\infty)}((X_{\tau_1}-\tau_1 v) \cdot u)
 = E^0[((X_{\tau_1}-\tau_1 v) \cdot u)^2 | D=\infty].
\end{align*}
\end{thm}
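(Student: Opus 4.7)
The plan is to exploit the regeneration decomposition of \cite{MR1902189} to rewrite $(X_{\tau_n}-\tau_n v)\cdot u$ as a centered i.i.d.\ sum with per-term variance $c_u$, apply the classical Hartman--Wintner law of the iterated logarithm, and then transfer the result from the regeneration index to real time. Recall that under $P^0$ the vectors $(X_{\tau_{k+1}}-X_{\tau_k},\tau_{k+1}-\tau_k)$, $k\geq 1$, are i.i.d.\ with common law that of $(X_{\tau_1},\tau_1)$ under $P^0(\cdot\,|\,D=\infty)$, and are independent of $(X_{\tau_1},\tau_1)$ itself. Condition $\mathrm{(T')}|\ell$ ensures that $\tau_1$ and $|X_{\tau_1}|$ have finite polynomial moments of every order under $P^0(\cdot\,|\,D=\infty)$ (see \cite{MR1902189,MR2198849}); in particular $c_u<\infty$. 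Put
\begin{align*}
 Y_k:=(X_{\tau_k}-X_{\tau_{k-1}})\cdot u-(\tau_k-\tau_{k-1})\,v\cdot u,\qquad k\geq 2.
\end{align*}
The $(Y_k)_{k\geq 2}$ are i.i.d., and \eqref{eq:lil3} together with the definition of $v$ forces $E^0[Y_k]=0$ and $\mathrm{Var}_{P^0}(Y_k)=c_u$; moreover $\sum_{k=2}^{n}Y_k=(X_{\tau_n}-\tau_n v)\cdot u-Y_1$, with $Y_1$ a.s.\ finite and hence inert for limsup/liminf.

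By Hartman--Wintner, $P^0$-almost surely,
\begin{align*}
 \limsup_{n\to\infty}\frac{(X_{\tau_n}-\tau_n v)\cdot u}{\sqrt{2c_u n\log\log n}}=1,\qquad
 \liminf_{n\to\infty}\frac{(X_{\tau_n}-\tau_n v)\cdot u}{\sqrt{2c_u n\log\log n}}=-1.
\end{align*}
Set $k(n):=\sup\{k\geq 0;\tau_k\leq n\}$ and $\mu:=E^0[\tau_1\,|\,D=\infty]$. The SLLN for the i.i.d.\ increments $\tau_k-\tau_{k-1}$ gives $\tau_n/n\to\mu$ and hence $k(n)/n\to\mu^{-1}$ $P^0$-a.s. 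Inserting the random index $k(n)$ into the display above via a sandwich between $k(n)$ and $k(n)+1$, and using $\log\log(n/\mu)\sim\log\log n^{\half}$, produces
\begin{align*}
 \limsup_{n\to\infty}\frac{(X_{\tau_{k(n)}}-\tau_{k(n)}v)\cdot u}{E^0[\tau_1\,|\,D=\infty]^{-\half}\sqrt{2c_u n\log\log n^{\half}}}=1,
\end{align*}
and symmetrically $-1$ for the $\liminf$.

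The final step is to dispose of the remainder $R_n:=(X_n-X_{\tau_{k(n)}})\cdot u-(n-\tau_{k(n)})\,v\cdot u$, i.e.\ to show $R_n=o(\sqrt{n\log\log n})$ almost surely. Since $X$ has nearest-neighbor steps and $n-\tau_{k(n)}\leq \tau_{k(n)+1}-\tau_{k(n)}$, we have $|R_n|\leq(1+|v|)(\tau_{k(n)+1}-\tau_{k(n)})$, so it suffices to prove
\begin{align*}
 \max_{1\leq k\leq N}(\tau_{k+1}-\tau_k)=o(N^{\half})\qquad P^0\mbox{-a.s.}
\end{align*}
and apply this at $N=k(n)\sim n/\mu$. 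This estimate follows from a direct Borel--Cantelli argument using the bound $E^0[\tau_1^p\,|\,D=\infty]<\infty$ for some $p>2$, which is a consequence of $\mathrm{(T')}|\ell$. This error control is, in my view, the main technical obstacle: both the finiteness of $c_u$ and the $o(\sqrt{n\log\log n})$ bound on $R_n$ are precisely where the strong integrability supplied by $\mathrm{(T')}|\ell$ is essential. Finally, \eqref{eq:getdoc_3eng_2} follows from \eqref{eq:getdoc_3eng_1} by replacing $u$ with $-u$ and noting $c_{-u}=c_u$.
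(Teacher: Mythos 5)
Your argument is correct, and its overall architecture (regeneration decomposition of $(X_n-nv)\cdot u$, an LIL for the sums of regeneration increments, transfer to the random index $k(n)$ via $k(n)/n\to E^0[\tau_1|D=\infty]^{-1}$ and the fact that $k(n)$ eventually visits every large integer, plus control of the overshoot) coincides with the paper's. The two places where you genuinely diverge are worth noting. First, for the main term the paper keeps the first, differently distributed block $Z_1^u$ in the sum and invokes Chung's LIL for independent but non-identically distributed variables (Theorem 7.5.1 of \cite{MR1796326}), which requires checking a third-moment condition $\Gamma_k s_k^{-3}(\log s_k)^{1+\epsilon}\to 0$; you instead discard the first block as an a.s.\ finite, hence inert, perturbation and apply Hartman--Wintner to the genuinely i.i.d.\ tail $(Y_k)_{k\geq 2}$, which needs only the finite variance $c_u<\infty$ and is cleaner. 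Second, for the error terms the paper treats $(X_n-X_{\tau_{k_n}})\cdot u$ and $n-\tau_{k_n}$ separately, the former by a Chebyshev bound using the exponential moment $E^0[\exp(cX_\gamma^*)]<\infty$ from $\mathrm{(T')}|\ell$ and the latter by a sixth-moment Chebyshev bound, each followed by Borel--Cantelli; you bound both at once by $(1+|v|)(\tau_{k(n)+1}-\tau_{k(n)})$ using the nearest-neighbor property and then prove the maximal estimate $\max_{k\leq N}(\tau_{k+1}-\tau_k)=o(N^{1/2})$ from a $p>2$ moment of $\tau_1$, which is more elementary and makes explicit that only polynomial integrability of the regeneration time is needed for the remainder. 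Both routes are valid; yours economizes on the hypotheses actually used in each step, while the paper's handles the non-identically distributed first block head-on rather than discarding it.
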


\section{Proof of the main theorem}
We get ready some notations for the proof of Theorem \ref{thm:getdoc_3eng_1}. 
We define \textit{regeneration times} $(\tau_k)_{k=0}^\infty$ as follows: 
\begin{align*}
 &\tau_0 :=0,\\
 &\tau_{k+1}:= \tau_k(X_\cdot)+\tau_1(X_{\tau_k+\cdot}-X_{\tau_k}),\qquad k \geq 1. 
\end{align*}
In addition, we define the random sequence $(k_n)_{n=0}^\infty$ by $\tau_{k_n} \leq n <\tau_{k_n+1}$ for $n \geq 0$. 
Note that 
\begin{align*}
 ((X_{\tau_1 \wedge \cdot}), \tau_1),\quad
 ((X_{(\tau_k+\cdot) \wedge \tau_{k+1}}-X_{\tau_k}),\tau_{k+1}-\tau_k),\qquad k \geq 1
\end{align*}
are independent variables under $P^0$, and furthermore, the law of 
\begin{align*}
 ((X_{(\tau_k+\cdot) \wedge \tau_{k+1}}-X_{\tau_k}),\tau_{k+1}-\tau_k),\qquad k \geq 1
\end{align*}
coincides with that of $((X_{\tau_1 \wedge \cdot}), \tau_1)$ under $P^0(\cdot |D=\infty)$ from Theorem 1.4 of \cite{MR1742891}. 
We also set
\begin{align*}
 Z_k^u:=((X_{\tau_k}-X_{\tau_{k-1}})-(\tau_k-\tau_{k-1})v) \cdot u,
 \qquad k \geq 1, u \in S^{d-1}, 
\end{align*}
where $v$ is the same vector as in the description of the law of large numbers \eqref{eq:lil3}. 

Let us start the proof of Theorem \ref{thm:getdoc_3eng_1}. 
Let us prove \eqref{eq:getdoc_3eng_1} only, because we can obtain \eqref{eq:getdoc_3eng_2} for $u \in S^{d-1}$ by applying \eqref{eq:getdoc_3eng_1} for the opposite vector $-u$. 
We shall decompose the quantity appearing in the left hand side of \eqref{eq:getdoc_3eng_1} as follows: 
\begin{align}
 &\frac{(X_n-nv) \cdot u}{E^0[\tau_1 | D=\infty ]^{-\half} (2c_u n \log \log n^\half)^\half} \nonumber\\
 &= \frac{\sum_{j=1}^{k_n} Z_j^u}{E^0[\tau_1 | D=\infty ]^{-\half} \varphi(c_u k_n)}
    \biggl( \frac{k_n \log \log (c_u k_n)^\half}{n \log \log n^\half} \biggr)^\half \nonumber \\
 &\quad
    +\frac{(X_n-X_{\tau_{k_n}}) \cdot u}{E^0[\tau_1 | D=\infty ]^{-\half} (2c_u n \log \log n^\half)^\half} \nonumber\\
 &\quad
    +\frac{(n-\tau_{k_n}) (v \cdot u)}{E^0[\tau_1 | D=\infty ]^{-\half} (2c_u n \log \log n^\half)^\half}.
 \label{eq:last}
\end{align}

We hope that the first term in the right hand side of the above expression is the main term to conclude the goal \eqref{eq:getdoc_3eng_1}. 
Now, to check that this conjecture is true, let us show that the second term and the third term in the right hand side of the above expression are error terms. 

\begin{lem} \label{lem:lil_2}
We have the following $P^0$-almost surely: 
\begin{align}
 &\lim_{n \to \infty} \frac{(X_n-X_{\tau_{k_n}}) \cdot u}{E^0[\tau_1 | D=\infty ]^{-\half} (2c_u n \log \log n^\half)^\half}
  =0, \label{eq:I_2} \\
 &\lim_{n \to \infty} \frac{(n-\tau_{k_n}) (v \cdot u)}{E^0[\tau_1 | D=\infty ]^{-\half} (2c_u n \log \log n^\half)^\half}
  =0. \label{eq:I_3}
\end{align}
\end{lem}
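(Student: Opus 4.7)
The plan is to control both error terms by a single, incomplete regeneration block --- the $(k_n+1)$-th one --- and then to show that, almost surely, no such block among the first $O(n)$ of them is as large as $\sqrt{n\log\log n}$. Concretely, because $\tau_{k_n}\leq n<\tau_{k_n+1}$, Cauchy--Schwarz together with $|u|=1$ gives the pathwise bounds
\begin{align*}
 |(X_n-X_{\tau_{k_n}})\cdot u|
 \leq Y_{k_n+1},\qquad
 n-\tau_{k_n}\leq W_{k_n+1},
\end{align*}
where $W_k:=\tau_k-\tau_{k-1}$ and $Y_k:=\sup_{\tau_{k-1}\leq m\leq\tau_k}|X_m-X_{\tau_{k-1}}|$. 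It therefore suffices to show that each of $W_{k_n+1}$ and $Y_{k_n+1}$ is $o(\sqrt{n\log\log n})$ almost surely.

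By the independence and identical distribution recalled before the lemma, the sequence $(W_k,Y_k)_{k\geq 2}$ is i.i.d.\ under $P^0$, with common law that of $(W_1,Y_1)$ under $P^0(\cdot\,|\,D=\infty)$. The exponential moment \eqref{eq:T} coming from $(T')|\ell$ directly gives $\sup_{0\leq m\leq\tau_1}|X_m|$, and hence $Y_1$, finite moments of every order; and it is known from \cite{MR1902189} that $(T')|\ell$ also forces $E^0[\tau_1^p]<\infty$ and $E^0[\tau_1^p\,|\,D=\infty]<\infty$ for every $p\geq 1$. Consequently $W_1$ and $Y_1$ have polynomial moments of all orders under both laws.

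With this input fix $\delta\in(0,1/2)$ and choose $p$ so large that $p\delta>2$. For each $\xi\in\{W,Y\}$, Markov's inequality and a union bound yield, for some constant $A>0$ and any $C>0$,
\begin{align*}
 P^0\Bigl(\max_{1\leq j\leq Cn+1}\xi_j>n^\delta\Bigr)
 \leq A\,(Cn+1)\,n^{-p\delta},
\end{align*}
which is summable in $n$. The Borel--Cantelli lemma then gives $\max_{1\leq j\leq Cn+1}\xi_j=o(n^\delta)=o(\sqrt{n\log\log n})$ almost surely. Since $\tau_k$ is (for $k\geq 2$) a partial sum of i.i.d.\ variables with finite mean $E^0[\tau_1\,|\,D=\infty]$, the strong law together with $\tau_{k_n}\leq n<\tau_{k_n+1}$ forces $k_n/n\to E^0[\tau_1\,|\,D=\infty]^{-1}$ almost surely, so that $k_n+1\leq Cn$ for some deterministic $C$ and all large $n$. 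Hence $W_{k_n+1}$ and $Y_{k_n+1}$ are both dominated by the above maximum, giving \eqref{eq:I_2} and \eqref{eq:I_3}.

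The only non-elementary input is the finiteness of all polynomial moments of $\tau_1$ under $(T')|\ell$: the exponential tail \eqref{eq:T} directly controls the walk's displacement on $[0,\tau_1]$ but not $\tau_1$ itself, and obtaining polynomial moments of the regeneration time requires the tail estimates of \cite{MR1902189}. Given that ingredient, everything else is a routine Borel--Cantelli argument exploiting the i.i.d.\ block structure.
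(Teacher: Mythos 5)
Your proof is correct and follows essentially the same route as the paper's: both reduce the two error terms to the size of the single incomplete regeneration block, control it via a union bound over the at most $O(n)$ blocks together with a Markov/Chebyshev tail estimate on one block, and conclude by Borel--Cantelli. The only cosmetic difference is that for the displacement term the paper applies the exponential Markov inequality with the moment $E^0[\exp(cX_\gamma^*)]$ directly, whereas you first extract polynomial moments from it; both arguments rest on the same inputs, namely condition (T') and the finiteness of all moments of $\tau_1$.
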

\begin{proof}
We start to show \eqref{eq:I_2}. 
Let us fix a $\gamma \in (0,1)$. 
For any $\epsilon >0$, we obtain 
\begin{align*}
 &P^0 \biggl( \frac{|(X_n-X_{\tau_{k_n}}) \cdot u|}{(2c_u n \log \log n^\half)^\half} >\epsilon \biggr)\\
 &\leq \exp \{ -c \epsilon (2c_u n \log \log n^\half)^\frac{\gamma}{2} \} E^0[\exp (c X_\gamma^* \circ \theta_{\tau_{k_n}})]
\end{align*}
by Chebyshev's inequality, where the constant $c>0$ as in the definition of (T)$_\gamma|\ell$, see \eqref{eq:T}.  
Since we obtain  
\begin{align*}
 E^0[\exp (c X_\gamma^* \circ \theta_{\tau_{k_n}})]
 \leq 2 E^0[\exp (c X_\gamma^*)] P^0(D=\infty)^{-1} n, 
\end{align*}
we now get 
\begin{align}
 &P^0 \biggl( \frac{|(X_n-X_{\tau_{k_n}}) \cdot u|}{(2c_u n \log \log n^\half)^\half} >\epsilon \biggr) \nonumber \\
 &\leq 2 E^0[\exp (c X_\gamma^*)] P^0(D=\infty)^{-1} n
       \exp \{ -c \epsilon (2c_u n \log \log n^\half)^\frac{\gamma}{2} \}.
 \label{eq:getdoc_3eng_8}
\end{align}
Because the left hand side of \eqref{eq:getdoc_3eng_8} is summable in $n$, the Borel-Cantelli lemma implies that we conclude \eqref{eq:I_2}. 

Similarly to the above, we can find that \eqref{eq:I_3} holds, because $\tau_1$ possesses all moments, see Proposition 3.1 of \cite{MR1902189} or Theorem 4.2 of \cite{MR2198849}, and we are appreciable as follows: for $\epsilon >0$, 
\begin{align*}
 &P^0 \biggl( \frac{n-\tau_{k_n}}{(2c_u n \log \log n^\half)^\half} >\epsilon \biggr)\\
 &\leq 2n P^0(D=\infty)^{-1} (\epsilon (2c_u n \log \log n^\half)^\half)^{-6} E^0[\tau_1^6]. 
\end{align*}
Therefore we finished to prove Lemma \ref{lem:lil_2}. 
\end{proof}

We find that the first term in the right hand side of \eqref{eq:last} is the main term to conclude the goal \eqref{eq:getdoc_3eng_1} from Lemma \ref{lem:lil_2}. 
Finally, we should obtain an appropriate estimate of this term in order to conclude the goal \eqref{eq:getdoc_3eng_1}. 
So let us show the following lemma: 

\begin{lem} \label{lem:lil_1}
We have $P^0$-almost surely 
\begin{align}
 \limsup_{n \to \infty} \frac{\sum_{j=1}^{k_n} Z_j^u}{E^0[\tau_1 | D=\infty ]^{-\half} \varphi(c_u k_n)}
    \biggl( \frac{k_n \log \log (c_u k_n)^\half}{n \log \log n^\half} \biggr)^\half
 =1.
 \label{eq:I_1}
\end{align}
\end{lem}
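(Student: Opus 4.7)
The plan is to apply the classical Hartman--Wintner law of the iterated logarithm to the partial sums $\sum_{j=1}^{K} Z_j^u$, and then to turn the resulting limsup in $K$ into one in $n$ by substituting the random index $K=k_n$ and invoking the strong law $k_n/n \to 1/E^0[\tau_1\mid D=\infty]$.

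First, I would record the basic properties of $(Z_j^u)_{j\ge 1}$. By the regeneration structure recalled just before the statement of Lemma~\ref{lem:lil_2}, the variables $Z_2^u, Z_3^u, \dots$ are i.i.d.\ under $P^0$, each distributed as $(X_{\tau_1}-\tau_1 v)\cdot u$ under $P^0(\cdot\mid D=\infty)$. The definition of $v$ in \eqref{eq:lil3} makes them centered, and the definition of $c_u$ in Theorem~\ref{thm:getdoc_3eng_1} identifies their variance as $c_u$. Finite second moments follow from \textrm{(T')}$|\ell$ --- applying \eqref{eq:T} with, say, $\gamma=1/2$ yields all polynomial moments of $|X_{\tau_1}|$ --- together with the fact that $\tau_1$ has all moments by Proposition~3.1 of \cite{MR1902189} or Theorem~4.2 of \cite{MR2198849}. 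The boundary term $Z_1^u$ is independent of the others and has a different law (it is not conditioned on $D=\infty$), but it is $P^0$-a.s.\ finite, so it contributes $o(\varphi(c_u k_n))$ and is negligible.

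Next, the Hartman--Wintner LIL applied to the centered i.i.d.\ family $(Z_j^u)_{j\ge 2}$ gives
\begin{align*}
 \limsup_{K\to\infty}\frac{\sum_{j=2}^K Z_j^u}{\sqrt{2c_u K\log\log K}}=1,\qquad P^0\text{-a.s.},
\end{align*}
and, using $\log\log K^{1/2}\sim \log\log K$ together with the negligible boundary term, this is equivalent to $\limsup_{K\to\infty}\sum_{j=1}^{K} Z_j^u/\varphi(c_u K)=1$. Since \textrm{(T')}$|\ell$ ensures $\tau_k<\infty$ $P^0$-a.s., we have $k_n\to\infty$ $P^0$-a.s., so substituting $K=k_n$ preserves the limsup and yields $\limsup_{n\to\infty}\sum_{j=1}^{k_n} Z_j^u/\varphi(c_u k_n)=1$ $P^0$-a.s.

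Finally, the strong law of large numbers applied to the i.i.d.\ increments $\tau_{k+1}-\tau_k$ ($k\ge 1$) gives $\tau_k/k \to E^0[\tau_1\mid D=\infty]$, whence $k_n/n \to 1/E^0[\tau_1\mid D=\infty]$ $P^0$-a.s. Consequently the second factor $(k_n \log\log(c_u k_n)^{1/2}/(n\log\log n^{1/2}))^{1/2}$ converges $P^0$-a.s.\ to $E^0[\tau_1\mid D=\infty]^{-1/2}$, which exactly compensates the factor $E^0[\tau_1\mid D=\infty]^{-1/2}$ in the denominator of the first factor. Multiplying these limits together gives \eqref{eq:I_1}. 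The main subtle point I foresee is making the $L^2$ bound on $Z_j^u$ fully rigorous using \textrm{(T')}$|\ell$ and the moment bounds on $\tau_1$; once that is in place, the argument reduces to the classical Hartman--Wintner LIL combined with an elementary LLN computation.
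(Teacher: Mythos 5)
Your proposal is correct and follows essentially the same route as the paper: a classical LIL for the regeneration increments $Z_j^u$, then substitution of the random index $k_n$, then the law of large numbers $k_n/n \to E^0[\tau_1\mid D=\infty]^{-1}$ to absorb the normalizing constants. The only differences are cosmetic --- the paper invokes Chung's LIL for independent, non-identically distributed summands (Theorem 7.5.1 of \cite{MR1796326}) so as to handle the differently distributed first term $Z_1^u$ directly, whereas you use Hartman--Wintner on $j\ge 2$ plus a negligible boundary term; and for the index substitution note that the relevant fact is not merely $k_n\to\infty$ but that $n\mapsto k_n$ is a nondecreasing surjection onto $\Z_{\geq 0}$ (each value $K$ is attained at $n=\tau_K$), which is what the paper's two-sided sup argument encodes and what guarantees the limsup is exactly preserved.
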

\begin{proof}
It is clear that $E^0[Z_k^u]=0$ holds for $k \geq 1$ , in fact
\begin{align*}
 E^0[\tau_1|D=\infty]^{-1} E^0[Z_k^u]
 = E^0[\tau_1|D=\infty]^{-1} E^0[X_{\tau_1}-\tau_1 v|D=\infty] \cdot u
 = 0.
\end{align*}
We define 
\begin{align*}
 &s_k^2:=\sum_{k=1}^n \var_{P^0}(Z_n^u)=E^0[(X_{\tau_1} \cdot u)^2]+(k-1)c_u,\\
 &\Gamma_k:=\sum_{k=1}^n E^0[|Z_k^u|^3]=E^0[|X_{\tau_1} \cdot u|^3]+(k-1) \hat{c}_u,
\end{align*}
where $\hat{c}_u:=E^0[|X_{\tau_1} \cdot u|^3 | D=\infty]$. 
For $0<\epsilon <1$, we get 
\begin{align*}
 &\frac{\Gamma_k}{s_k^3} (\log s_k)^{1+\epsilon}\\
 &= \frac{E^0[|X_{\tau_1} \cdot u|^3]+(k-1) \hat{c}_u}{k \hat{c}_u}
    \biggl( \frac{k c_u}{E^0[(X_{\tau_1} \cdot u)^2]+(k-1)c_u} \biggr)^{\frac{3}{2}}\\
 &\quad \times
    \half \hat{c}_u c_u^{-\frac{3}{2}} k^{-\half}
    \biggl\{ \log \biggl( \frac{E^0[(X_{\tau_1} \cdot u)^2]+(k-1)c_u}{k c_u} \biggr)
            +\log (kc_u) \biggr\}^{1+\epsilon}\\
 &\xrightarrow{k \to \infty} 0.
\end{align*}
Combining the above with Theorem 7.5.1 of \cite{MR1796326}, we have the law of the iterated logarithm for $(Z_k^u)_{k=1}^\infty$: 
\begin{align*}
 \limsup_{k \to \infty} \frac{\sum_{j=1}^k Z_j^u}{\varphi(s_k^2)}
 = 1,\qquad P^0 \hyphen \as, 
\end{align*}
where $\varphi(x):=(2x \log \log x^\half)^\half$. 
By $\lim_{k \to \infty} \frac{s_k^2}{c_u k}=1$, we get 
\begin{align}
 \limsup_{k \to \infty} \frac{\sum_{j=1}^k Z_j^u}{\varphi(c_u k)}
 = \limsup_{k \to \infty} \frac{\sum_{j=1}^k Z_j^u}{\varphi(s_k^2)}
                          \biggl( \frac{s_k^2 \log \log s_k}{c_u k \log \log (c_u k)^\half} \biggr)^\half
 = 1,\qquad P^0 \hyphen \as
 \label{eq:getdoc_3eng_3}
\end{align}
To obtain the conclusion \eqref{eq:I_1}, it is sufficient to show
\begin{align}
 \limsup_{n \to \infty} \frac{\sum_{j=1}^{k_n} Z_j^u}{\varphi(c_u k_n)}=1,\qquad P^0 \hyphen \as
 \label{eq:getdoc_3eng_7}
\end{align}
because we have 
\begin{align*}
 \lim_{n \to \infty} \frac{k_n}{n}=E^0[\tau_1|D=\infty]^{-1},\qquad P^0 \hyphen \as
\end{align*}
from Proposition 2.1 of \cite{MR1742891}. 

Let us show \eqref{eq:getdoc_3eng_7}. 
For a fixed $m \geq 0$, noting that $k_m \leq n$ holds for all $n \geq m$ by the definition of $(k_n)_{n=0}^\infty$, we have
\begin{align*}
 \sup_{i \geq n} \frac{\sum_{j=1}^i Z_j^u}{\varphi(c_u i)}
 \leq \sup_{i \geq k_m} \frac{\sum_{j=1}^i Z_j^u}{\varphi(c_u i)}
 = \sup_{i \geq m} \frac{\sum_{j=1}^{k_i} Z_j^u}{\varphi(c_u k_i)}. 
\end{align*}
Letting $n \to \infty$ and then $m \to \infty$, By \eqref{eq:getdoc_3eng_3}, we get 
\begin{align}
 1
 \leq \limsup_{n \to \infty} \frac{\sum_{j=1}^{k_n} Z_j^u}{\varphi(c_u k_n)}. 
 \label{eq:lil1}
\end{align}
On the other hand, for a fixed $m \geq 0$, we have 
\begin{align*}
 \sup_{i \geq n} \frac{\sum_{j=1}^{k_i} Z_j^u}{\varphi(c_u k_i)}
 \leq \sup_{i \geq m} \frac{\sum_{j=1}^i Z_j^u}{\varphi(c_u i)}
\end{align*}
for a large enough $n$. 
Similarly to the above, letting $n \to \infty$ and then $m \to \infty$, 
we can obtain
\begin{align}
 \limsup_{n \to \infty} \frac{\sum_{j=1}^{k_n} Z_j^u}{\varphi(c_u k_n)}
 \leq 1. 
 \label{eq:lil2}
\end{align}
We conclude \eqref{eq:getdoc_3eng_7} from \eqref{eq:lil1} and \eqref{eq:lil2}. 
\end{proof}

Therefore, the equality \eqref{eq:getdoc_3eng_1} follows immediately from the equality \eqref{eq:last}, Lemma \ref{lem:lil_2} and Lemma \ref{lem:lil_1}. 

\paragraph{Acknowledgment.}
The author thanks Professor Shigenori Matsumoto and Professor Takao Nishikawa for many useful discussions and important comments on this paper. 

\bibliographystyle{plain}

\begin{thebibliography}{1}

\bibitem{MR1796326}
Kai~Lai Chung.
\newblock {\em A course in probability theory}.
\newblock Academic Press Inc., San Diego, CA, third edition, 2001.

\bibitem{MR1420619}
Barry~D. Hughes.
\newblock {\em Random walks and random environments. {V}ol. 2}.
\newblock Oxford Science Publications. The Clarendon Press Oxford University
  Press, New York, 1996.
\newblock Random environments.

\bibitem{MR1902189}
Alain-Sol Sznitman.
\newblock An effective criterion for ballistic behavior of random walks in
  random environment.
\newblock {\em Probab. Theory Related Fields}, Vol. 122, No.~4, pp. 509--544,
  2002.

\bibitem{MR2198849}
Alain-Sol Sznitman.
\newblock Topics in random walks in random environment.
\newblock In {\em School and {C}onference on {P}robability {T}heory}, ICTP
  Lect. Notes, XVII, pp. 203--266 (electronic). Abdus Salam Int. Cent. Theoret.
  Phys., Trieste, 2004.

\bibitem{MR1742891}
Alain-Sol Sznitman and Martin Zerner.
\newblock A law of large numbers for random walks in random environment.
\newblock {\em Ann. Probab.}, Vol.~27, No.~4, pp. 1851--1869, 1999.

\bibitem{MR2071631}
Ofer Zeitouni.
\newblock Random walks in random environment.
\newblock In {\em Lectures on probability theory and statistics}, Vol. 1837 of
  {\em Lecture Notes in Math.}, pp. 189--312. Springer, Berlin, 2004.

\end{thebibliography}

\end{document}